\numberwithin{equation}{section}
\newcommand{\R}{\mathbb{R}}
\newcommand{\Z}{\mathbb{Z}}
\newtheorem{theorem}{Theorem}[section]
\newtheorem{lemma}[theorem]{Lemma}
\newtheorem{coro}[theorem]{Corollary}
\theoremstyle{definition}
\newtheorem{remark}[theorem]{Remark}
\newcommand{\Extend}[5]{\ext@arrow0099{\arrowfill@#1#2#3}{#4}{#5}}
\begin{document}
\title[Maximal estimates]{Maximal estimates for fractional Schr\"odinger equations in scaling critical magnetic fields}

\author{Haoran Wang}
\address{Department of Mathematics, Beijing Institute of Technology, Beijing 100081, China}
\email{wanghaoran@bit.edu.cn}

\author{Jiye Yuan}
\address{Department of Mathematics, Beijing Key Laboratory on Mathematical Characterization, Analysis, and Applications of Complex Information, Beijing Institute of Technology, Beijing 100081, China}
%\address{Department of Mathematics, Beijing Institute of Technology, Beijing 100081, China}
%\address{Beijing Key Laboratory on Mathematical Characterization, Analysis, and Applications of Complex Information, Beijing Institute of Technology, Beijing 102488, P. R. China}
\email{yuan\_jiye@bit.edu.cn}

\begin{abstract}
In this paper, we combine the argument of \cite{FZZ} and \cite{SSWZ} to prove the maximal estimates for fractional Schr\"odinger equations
$(i\partial_t+\mathcal{L}_{\mathbf{A}}^{\frac\alpha 2})u=0$ in the purely magnetic fields
which includes the Aharonov-Bohm fields. The proof is based on the cluster spectral measure estimates. In particular $\alpha=1$, the maximal estimate for wave equation is sharp up to the endpoint.
\end{abstract}

%\bigskip\bigskip
\maketitle

\begin{center}
 \begin{minipage}{100mm}
   { \small {{\bf Key Words:}  Fractional Schr\"odinger equation;  Maximal estimate; Aharonov-Bohm potential; Spectral measure.
   }
      {}
   }\\
    { \small {\bf AMS Classification:}
      {42B25,  35Q40, 35Q41.}
      }
 \end{minipage}
 \end{center}

\section{Introduction and main results}
In this paper, we consider the Cauchy problem of fractional Schr\"odinger equation
\begin{align}\label{AB-eq}
\begin{cases}
   &(i\partial_t+\mathcal{L}_{\mathbf{A}}^{\frac\alpha2})u=0,\quad (t,x)\in\mathbb{R}\times\mathbb{R}^2\setminus\{0\}, \\
   &u(0,x)=f(x),
  \end{cases}
\end{align}
where $\alpha$ is a positive real number and the operator $\mathcal{L}_{\mathbf{A}}$ is the magnetic Schr\"odinger operator given by
\begin{equation*}
\mathcal{L}_{\mathbf{A}}=\Big(i\nabla+\frac{\mathbf{A}(\hat{x})}{|x|}\Big)^2,\quad x\in \R^2\setminus\{0\},\quad \hat{x}=\frac{x}{|x|}\in\mathbb{S}^1,
\end{equation*}
where $\mathbf{A}\in W^{1,\infty}(\mathbb{S}^1,\mathbb{R}^2)$ satisfies the transversality condition
\begin{equation}\label{eq:transversal}
{\mathbf{A}}(\hat{x})\cdot\hat{x}=0,
\qquad
\text{for all }x\in\R^2.
\end{equation}
In particular, a typical example of ${\bf A}$ is known as the {\it Aharonov-Bohm} potential
\begin{equation}\label{ab-potential}
{\bf A}(\hat{x})=\sigma\Big(-\frac{x_2}{|x|},\frac{x_1}{|x|}\Big),\quad \sigma\in\R,
\end{equation}
which was initially used to study one of the most interesting and intriguing effect of quantum physics by Aharonov and Bohm in \cite{AB}. This Aharonov-Bohm effect occurs when electrons propagate in a domain with a zero magnetic field but with a nonzero vector potential, see \cite{PT89} and the references therein.
In another physic effect, the fermionic charges can be non-integer multiples of the Higgs charges in another typical cosmic-string scenarios observed by Alford and Wilczek \cite{AW}.
As the flux is quantized with respect to the Higgs charge, this will lead to  a non-trivial Aharonov-Bohm scattering of these fermions. From the mathematical point,
the operator $\mathcal{L}_{\mathbf{A}}$ can be extended as a self-adjoint operator (see \cite{FFFP}) and
the fractional operator $\mathcal{L}_{\mathbf{A}}^{\frac\alpha2}$ can be defined from the spectral perspective, see~\cite{FLS}~ for the fractional operator.
The solution of ~\eqref{AB-eq}~ can be written as
\begin{equation}\label{AB-solution}
u(t,x)=e^{it\mathcal{L}_{\mathbf{A}}^{\frac\alpha2}}f(x),
\end{equation}
which includes the usual wave and Schr\"odinger equation as two special cases.
The dispersive and Strichartz estimates associated with  the magnetic Schr\"odinger operator $\mathcal{L}_{\mathbf{A}}$
have been extensively studied in \cite{FFFP,FZZ1,GYZZ} and we also refer to  \cite{FZZ,GWZZ} for the resolvent estimates.

In this paper, we consider the the minimal regularity of initial data for which the above solution \eqref{AB-solution} pointwisely converges to the initial data $f\in H_{\bf A}^s(\R^2)$,
\begin{equation}\label{point-A}
\lim_{t\to 0}e^{it\mathcal{L}_{\mathbf{A}}^{\frac\alpha2}}f(x)
=f(x),\quad a.\, e.
\end{equation}
that is, the pointwise convergence problem related to the operator $\mathcal{L}_{\mathbf{A}}$.
The problem for free Schr\"odinger $(i\partial_t-\Delta)u=0$ with initial data $f\in H^s$ was first proposed by Carleson in ~\cite{Carl}~ (see also ~\cite{KR}~).
In this classical case, the solution can be formally expressed by using Fourier transform as
\begin{equation}\label{ss}
u(t,x)=e^{it\Delta}f(x)=\int_{\mathbb{R}^n}e^{2\pi ix\cdot\xi}e^{-2\pi it|\xi|^2}\hat{f}(\xi)d\xi.
\end{equation}
The standard routine for the pointwise convergence of the free Schr\"odinger operator \eqref{ss} is to find the minimal regularity of $f$
for which the corresponding maximal estimate holds as follows
\begin{equation}\label{max-s}
  \left\|\sup_{0<t<1}|e^{it\Delta}f(x)|\right\|_{L^{q}(B^{n}(0,1))}\le C_{n,q,s}\|f\|_{H^{s}(\R^n)}
\end{equation}
where $B^{n}(0,1)$ is the unit ball in $\R^{n}$.

%In 1980, Carleson first put out a question of determining the minimal parameter $s>0$ such that for $\forall f\in H^{s}(\R)$, there holds
%\begin{equation}\label{point}
%  \lim_{t\rightarrow 0}e^{it\Delta}f(x) = f(x), \quad\text{for}\,\,a.e.\, x\in \R.
%\end{equation}
In one dimension, Carleson proved that \eqref{max-s} holds for $s\ge\frac{1}{4}$. In 1981, Dahlberg and Kenig \cite{D-K} proved that the result obtained by Carleson is sharp.
In higher dimension for $n\ge2$, Sj\"olin \cite{Sjolin} and Vega \cite{Vega} independently showed that \eqref{max-s} holds for $s>\frac{1}{2}$.
Later, this range was improved by Lee \cite{Lee} to $s>\frac{3}{8}$ for $n=2$, and by Bourgain \cite{Bour2013} to $s>\frac{1}{2}-\frac{1}{4n}$ for $n\ge 3$.
Bourgain also proved that $s\ge\frac{1}{2}-\frac{1}{n}$ is necessary for \eqref{max-s} to hold. Du, Guth and Li \cite{DGL} obtained $s>\frac{1}{3}$ in dimension $n=2$.
Recently, Du and Zhang \cite{DZ} proved that \eqref{max-s} holds for $s>\frac{n}{2(n+1)}$ for $n\ge3$. Luc$\grave{a}$ and Rogers obtained the necessary condition
$s\ge\frac{n}{2(n+2)}$ and later Bougain obtained the best necessary condition $s\ge\frac{n}{2(n+1)}$ up to now.\vspace{0.2cm}

%Similarly in this paper, the main task is to establish the maximal estimates for fractional Schr\"odinger equations associated with the operator $\mathcal{L}_{\mathbf{A}}$, which include the usual wave and Schr\"odinger equation as two special cases.

%We briefly review the history of related maximal estimates here. There are many researchers devoted to the study of maximal estimates for the solution associated with the free Schr\"odinger $\mathcal{L}_{0,0}$ without a potential, i.e., $\mathbf{A}\equiv0, a\equiv0$. In this classical case, the solution can be formally expressed by using Fourier transform as
%\begin{equation*}
%u(t,x)=e^{it\Delta}f(x)=\int_{\mathbb{R}^n}e^{2\pi ix\cdot\xi}e^{-2\pi it|\xi|^2}\hat{f}(\xi)d\xi.
%\end{equation*}
%This direction has been completed so far, as we all known, and we shall not list related references here because it is rather easy to find for interested readers.
%However, we would like to mention the interesting result ~\cite{BLN}~, in which the authors consider the Schr\"odinger equation with orthonormal initial data.
It is nature to ask the same problem for more general dispersive equations associated with Schr\"odinger operator with potentials.
However, due to the lack of Fourier transform, there are much fewer results compared with the free Schr\"odinger case.
The picture in this direction is far to be complete since many powerful tools (e.g. decoupling, polynomial decomposition) cannot be applied in this setting.
It worths to mention that one of the most interesting potentials, the so-called inverse-square potential, which prevents us from using Fourier transform to give the explicit expression of the solution and hence the related techniques of Fourier analysis fail to work.
However, by replacing the Fourier transform by the Hankel transform, the authors of ~\cite{ZZM}~ established the maximal estimates for Schr\"odinger equation with inverse-square potential, in which they write the solution in terms of spherical harmonics. For more details, we refer the interested readers to ~\cite{ZZM}~ and the references therein. As for other types of maximal estimates, one is referred to ~\cite{B,B1}~ for quadratic Weyl sum and ~\cite{KLO}~ for average over space curve.
Even though, almost all of the results are far to be sharp in the setting of equations with potentials.\vspace{0.2cm}

To the best of our knowledge, there are few papers on the maximal estimates related to the magnetic operator $\mathcal{L}_{\mathbf{A}}$ available, for which we start a new program to study the maximal estimates associated with the operator $\mathcal{L}_{\mathbf{A}}$. To be specific, we try to establish the local- and global-in-time maximal estimates associated with the operator $\mathcal{L}_{\mathbf{A}}$. The proof is based on the cut-off spectral measure estimates.
To state the main theorems, we define the distorted Sobolev space as follows
\begin{align}\label{AB-inhomo}
&H^s_{\mathbf{A}}(\mathbb{R}^2):=(I+\mathcal{L}_{\mathbf{A}})^{-\frac{s}{2}}L^2(\mathbb{R}^2).
\end{align}
In particular, we shall write
\begin{equation}\label{AB-inhomo-norm}
\|f\|_{H^s_\mathbf{A}(\mathbb{R}^2)}:=\|(I+\mathcal{L}_{\mathbf{A}})^{\frac{s}{2}}f\|_{L^2(\mathbb{R}^2)},
\end{equation}
in what follows.
Now we are in the position to state our main results.
\begin{theorem}[Local estimate]\label{thm:local}
Let $u(t,x)=e^{it\mathcal{L}_{\mathbf{A}}^{\frac\alpha2}}f(x),\alpha>0$ be the solution to ~\eqref{AB-eq}~ with $f\in H^s_\mathbf{A}(\R^2)$, then
for

$\bullet$ either $2\leq q\leq 6$ and $s>\frac\alpha4(\frac6q-1)+(1-\frac{2}{q})$,%2(\frac12-\frac1q)$,

$\bullet$ or $6\leq q<\infty$ and $s>1-\frac{2}{q}$,

there exists a constant $C$ such that
\begin{equation}
\begin{split}
\big\|\sup_{|t|\leq1}|e^{it\mathcal{L}_{\mathbf{A}}^{\frac\alpha2}}f|\big\|_{L^q(\R^2)}\leq C\|f\|_{H_{\mathbf{A}}^s(\mathbb{R}^2)}.
\end{split}
\end{equation}

\end{theorem}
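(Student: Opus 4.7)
The plan is to combine a spectral Littlewood--Paley decomposition and a time-Sobolev slicing in the spirit of \cite{SSWZ} with a frequency-localized non-admissible Strichartz estimate coming from the cluster spectral measure bounds of \cite{FZZ}. Fix a smooth cutoff $\phi \in C^\infty_c((1/2,2))$ and set $P_k = \phi(2^{-k}\sqrt{\mathcal{L}_{\mathbf{A}}})$. The Gaussian heat-kernel bounds for $\mathcal{L}_{\mathbf{A}}$ (see \cite{FFFP}) yield the Littlewood--Paley characterization of $H^s_{\mathbf{A}}$, so it suffices to show, for every dyadic piece,
\[
\big\|\sup_{|t|\le 1}|e^{it\mathcal{L}_{\mathbf{A}}^{\alpha/2}} P_k f|\big\|_{L^q(\R^2)} \lesssim 2^{k s_0(q,\alpha)}\,\|P_k f\|_{L^2},
\]
where $s_0$ is the borderline exponent in the theorem; the strict inequality $s > s_0$ is only used to sum the dyadic pieces.

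\textbf{Step 2 (Time-Sobolev slicing).} Partition $[-1,1]$ into $\sim 2^{\alpha k}$ intervals $I_j$ of length $\tau_k = 2^{-\alpha k}$. The one-dimensional Sobolev trace inequality, combined with the frequency-localization bound $\|\partial_t(e^{it\mathcal{L}_{\mathbf{A}}^{\alpha/2}} P_k f)\|_{L^2_t} \lesssim 2^{\alpha k}\|P_k f\|_{L^2}$, yields on each interval $\sup_{t\in I_j}|u_k(t,x)|^2 \lesssim 2^{\alpha k}\int_{I_j}|u_k(t,x)|^2\, dt$. Summing over $j$, taking $L^{q/2}_x$ norms, and applying Minkowski's inequality ($q \ge 2$) gives the master reduction
\[
\big\|\sup_{|t|\le 1}|u_k|\big\|_{L^q_x}^2 \lesssim 2^{\alpha k}\,\|u_k\|_{L^2_t([-1,1]) L^q_x(\R^2)}^2,
\]
reducing the maximal estimate to a fixed-frequency space-time bound at the critical time scale.

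\textbf{Step 3 (Endpoint estimates and interpolation).} At the easy endpoint $q = 2$ the trivial energy bound $\|u_k\|_{L^2_t L^2_x}\lesssim \|P_kf\|_{L^2}$ combined with the master reduction produces $\|\sup_t|u_k|\|_{L^2}\lesssim 2^{\alpha k/2}\|P_k f\|_{L^2}$, matching $s_0 = \alpha/2$. At the harder endpoint $q = 6$, the cluster spectral measure bounds of \cite{FZZ} allow the Keel--Tao $TT^*$ argument to be run in the magnetic setting, producing the scaling-critical non-admissible Strichartz estimate
\[
\|e^{it\mathcal{L}_{\mathbf{A}}^{\alpha/2}} P_k f\|_{L^2_t L^6_x(\R\times\R^2)} \lesssim 2^{k(\tfrac{2}{3}-\tfrac{\alpha}{2})}\|P_k f\|_{L^2},
\]
which via Step 2 gives $\|\sup_t|u_k|\|_{L^6}\lesssim 2^{2k/3}\|P_k f\|_{L^2}$, i.e.\ $s_0 = 2/3$. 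Hölder interpolation in the spatial variable between the $q = 2$ and $q = 6$ endpoints, with $\tfrac{1}{q} = \tfrac{1-\theta}{2} + \tfrac{\theta}{6}$, then produces the exponent $(1-\theta)\tfrac{\alpha}{2}+\theta\cdot\tfrac{2}{3} = (1-\tfrac{2}{q}) + \tfrac{\alpha}{4}(\tfrac{6}{q}-1)$ for $q \in [2,6]$, exactly as claimed. For the range $q \in [6,\infty)$ we further interpolate between the $q = 6$ bound and the Bernstein-energy bound $\|\sup_t|u_k|\|_{L^\infty}\lesssim 2^k\|P_k f\|_{L^2}$, which produces $s_0 = 1-\tfrac{2}{q}$.

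\textbf{Main obstacle.} The linchpin of the argument is the frequency-localized non-admissible Strichartz estimate $\|u_k\|_{L^2_t L^6_x}\lesssim 2^{k(2/3-\alpha/2)}\|P_k f\|_{L^2}$. Since $\mathcal{L}_{\mathbf{A}}$ has no Fourier multiplier representation and carries a singular Aharonov--Bohm potential at the origin, the standard stationary-phase, decoupling, and polynomial-partitioning tools are unavailable; we must substitute the cluster spectral measure construction from \cite{FZZ}, which decomposes $dE_{\sqrt{\mathcal{L}_{\mathbf{A}}}}(\lambda)$ through generalized-eigenfunction/Bessel expansions in the angular-momentum basis compatible with the transversality condition~\eqref{eq:transversal}. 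The $TT^*$ kernel is then bounded cluster-by-cluster at the natural angular scale $\lambda^{-1/2}$, with the Aharonov--Bohm flux entering as a phase shift; this is what plays the role of the flat-case stationary-phase argument in our setting. A minor technicality is that an $\varepsilon$-loss must be absorbed in the dyadic summation, which explains the strict inequality in the exponent $s$.
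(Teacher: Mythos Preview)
Your approach is genuinely different from the paper's and has a real gap at the $q=6$ endpoint.

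\textbf{What the paper actually does.} For $q\ge 6$ the paper does \emph{not} pass through any Strichartz estimate. It multiplies by a Schwartz cutoff $\rho(t)$ with $\operatorname{supp}\hat\rho\subset(-2,2)$, applies $|D_t|^{1/2+\delta}$ and Sobolev in $t$, and then decomposes $f$ into \emph{unit-width} spectral clusters $\chi_k^\alpha=\chi_{[k,k+1]}(\mathcal{L}_{\mathbf A}^{\alpha/2})$. The compact support of $\hat\rho$ forces the pieces $\hat F_k(\tau,\cdot)$ to be almost orthogonal in $\tau$, so $\|F\|_{L^2_t}^2\lesssim\sum_k\|\hat F_k\|_{L^2_\tau}^2$; Minkowski then swaps $L^q_x$ inside, and the spectral cluster bound $\|\chi_k^\alpha\|_{L^2\to L^q}$ from Lemma~2.2 is applied pointwise in $\tau$. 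The $q=2$ case is handled separately via Lemma~2.3 and interpolation closes the range $2\le q\le 6$.

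\textbf{The gap in your argument.} Your master reduction (Step~2) and the $q=2$, $q=\infty$ endpoints are fine. The problem is the claimed estimate
\[
\|e^{it\mathcal{L}_{\mathbf A}^{\alpha/2}}P_k f\|_{L^2_t(\R)L^6_x(\R^2)}\lesssim 2^{k(2/3-\alpha/2)}\|P_kf\|_{L^2}.
\]
This does \emph{not} follow from Keel--Tao $TT^\ast$: the pair $(q,r)=(2,6)$ violates the admissibility condition $\tfrac{1}{q}+\tfrac{\sigma}{r}\le\tfrac{\sigma}{2}$ for every decay exponent $\sigma\le 1$, so the abstract machinery does not apply regardless of what dispersive input you feed it. Moreover the spectral measure bounds of \cite{FZZ} are $L^p\to L^{p'}$ bounds for $dE(\lambda)$, not pointwise-in-time dispersive bounds for the propagator, so the ``cluster spectral measure bounds allow Keel--Tao'' sentence conflates two different objects. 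Concretely, for $\alpha=1$ the global-in-time estimate is \emph{false}: a Knapp cap of angular width $\theta$ at unit frequency stays coherent for time $\sim\theta^{-2}$, and one computes $\|u\|_{L^2_t(\R)L^6_x}/\|f\|_{L^2}\gtrsim\theta^{-2/3}\to\infty$. Even if you retreat to $t\in[-1,1]$, obtaining the sharp exponent $2^{k(2/3-\alpha/2)}$ is essentially the same difficulty you started with; the mechanism that produces it is precisely the unit-width (not dyadic) cluster decomposition together with the Plancherel orthogonality that the paper uses. So your Step~3 replaces the main estimate by one that is no easier and, as written, unjustified.
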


As a direct consequence of Theorem \ref{thm:local} with $q=2$ for $0<\alpha\le\frac{4}{3}$ and $q=6$ for $\alpha>\frac{4}{3}$, %by a standard argument
we have the following pointwise convergence result.
\begin{coro}
Let $s>\frac{\alpha}{2}$ when $0<\alpha\le\frac{4}{3}$ or $s>\frac{2}{3}$ when $\alpha>\frac{4}{3}$, then there holds the pointwise convergence \eqref{point-A}
for $ f\in H^{s}_{\mathbf{A}}(\R^2)$.
\end{coro}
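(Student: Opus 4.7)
The corollary is a density reduction from Theorem~\ref{thm:local}. The plan is first to evaluate the theorem at the two sharpest endpoint values of $q$, and then run the classical Stein-type a.e.\ convergence argument, namely: a maximal estimate together with pointwise convergence on a dense subclass yields a.e.\ convergence in the full space.

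Specializing the first branch of Theorem~\ref{thm:local} to $q=2$ gives the condition $s>\tfrac{\alpha}{4}(3-1)+0=\tfrac{\alpha}{2}$, while specializing either branch to $q=6$ gives $s>\tfrac{2}{3}$. These two thresholds meet precisely at $\alpha=\tfrac{4}{3}$, so using the maximal estimate with $q=2$ for $0<\alpha\le\tfrac{4}{3}$ and with $q=6$ for $\alpha>\tfrac{4}{3}$ matches the dichotomy stated in the corollary.

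For the reduction, I take $\mathcal{D}\subset H^s_{\mathbf{A}}(\R^2)$ to be the space of spectrally localized functions $\varphi(\mathcal{L}_{\mathbf{A}})g$ with $\varphi\in C_c^\infty((0,\infty))$ and $g\in L^2(\R^2)$; such $f\in\mathcal{D}$ lie in the domain of every power of $\mathcal{L}_{\mathbf{A}}$. Functional calculus gives $\|e^{it\mathcal{L}_{\mathbf{A}}^{\alpha/2}}f-f\|_{H^N_{\mathbf{A}}}\le |t|\,\|\mathcal{L}_{\mathbf{A}}^{\alpha/2}f\|_{H^N_{\mathbf{A}}}$ for every $N$, which under the magnetic Sobolev embedding $H^N_{\mathbf{A}}\hookrightarrow L^\infty(\R^2)$ (for $N$ large) promotes to uniform convergence, hence $e^{it\mathcal{L}_{\mathbf{A}}^{\alpha/2}}f(x)\to f(x)$ at every $x$ for $f\in\mathcal{D}$; density of $\mathcal{D}$ in $H^s_{\mathbf{A}}$ is obtained by spectral cut-offs $\chi_n(\mathcal{L}_{\mathbf{A}})f\to f$ in $H^s_{\mathbf{A}}$ via dominated convergence on the spectral measure. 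Setting
\[
\Omega^* g(x):=\limsup_{t\to 0}\bigl|e^{it\mathcal{L}_{\mathbf{A}}^{\alpha/2}}g(x)-g(x)\bigr|,
\]
and choosing $f_n\in\mathcal{D}$ with $f_n\to f$ in $H^s_{\mathbf{A}}$, the triangle inequality gives
\[
\Omega^* f(x)=\Omega^*(f-f_n)(x)\le\sup_{|t|\le 1}\bigl|e^{it\mathcal{L}_{\mathbf{A}}^{\alpha/2}}(f-f_n)(x)\bigr|+|(f-f_n)(x)|.
\]
Applying $L^q(\R^2)$-norms, Theorem~\ref{thm:local} and the embedding $H^s_{\mathbf{A}}(\R^2)\hookrightarrow L^q(\R^2)$ yields
\[
\|\Omega^* f\|_{L^q(\R^2)}\le C\|f-f_n\|_{H^s_{\mathbf{A}}}+\|f-f_n\|_{L^q(\R^2)}\xrightarrow{n\to\infty}0,
\]
so $\Omega^* f=0$ a.e., which is exactly \eqref{point-A}.

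The one place the argument is not purely formal is the magnetic Sobolev embedding $H^s_{\mathbf{A}}(\R^2)\hookrightarrow L^q(\R^2)$ used in the last display: for $q=2$ it is immediate from \eqref{AB-inhomo-norm} since $s>0$, while for $q=6$ the hypothesis $s>\tfrac{2}{3}$ is exactly what is needed by the scaling $\tfrac{1}{2}-\tfrac{s}{2}=\tfrac{1}{6}$. I expect this to be the main (and only mild) obstacle; it follows from spectral multiplier or Riesz transform bounds for $\mathcal{L}_{\mathbf{A}}$ that are standard in the scaling-critical magnetic setting and are already available in the references cited in the introduction. Granting this embedding, the density reduction closes the argument.
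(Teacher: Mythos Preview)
Your argument is correct and matches the paper's approach exactly: the paper simply records that the corollary follows from Theorem~\ref{thm:local} by taking $q=2$ when $0<\alpha\le\tfrac{4}{3}$ and $q=6$ when $\alpha>\tfrac{4}{3}$, and you have supplied the standard density/maximal-function reduction that the paper leaves implicit. Your one reservation about the magnetic embedding $H^s_{\mathbf{A}}\hookrightarrow L^q$ can be sidestepped entirely by estimating the term $|(f-f_n)(x)|$ in measure via Chebyshev in $L^2$ rather than in $L^q$, since $\|f-f_n\|_{L^2}\le\|f-f_n\|_{H^s_{\mathbf{A}}}$ for $s\ge0$.
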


\begin{remark}
In particular $\alpha=1$, i.e., the half-wave operator, from Rogers and Villarroya in \cite[Theorem 1]{RV}, the result is sharp up to endpoint $s=\frac12$.
But the result is far to be sharp in the most interesting and challenging Schr\"odinger case, that is, $\alpha=2$. We will continue investigating the problem in future work.
\end{remark}

\begin{remark}
In the free case ${\bf A}\equiv0$, in which the Fourier transform is available, Sj\"olin and Walther independently used the stationary phase to obtain the corresponding maximal estimate for the operator $e^{it(-\Delta)^{\frac{\alpha}{2}}}$. In one dimension Sj\"olin \cite{Sjolin} obtained the pointwise convergence holds for $s\ge\frac{1}{4}$ for $\alpha>1$ which is sharp, and Walther \cite{W} proved that the pointwise convergence holds for $s>\frac{\alpha}{4}$ for $0<\alpha<1$, which is almost sharp up to the endpoint. In higher dimension, for $\alpha>1$ Sj\"olin \cite{Sjolin} obtained the result of $s>\frac{1}{2}$ in $\R^n$ if $n\ge3$
and $s\ge\frac{1}{2}$ if $n=2$, and the pointwise convergence for the free Schr\"odinger equation for $0<\alpha<1$ in $\R^n$ with $n\ge2$ is left to be solved.
\end{remark}

\begin{theorem}[Global estimate]\label{thm:global}
Let $u(t,x)=e^{it\mathcal{L}_{\mathbf{A}}^{\frac\alpha2}}f(x), \alpha>0$ be the solution to ~\eqref{AB-eq}~ with $f\in \dot{H}^s_\mathbf{A}(\mathbb{R}^2)$, then for $6\leq q<\infty$ and $s>1-\frac{2}{q}$, we have
\begin{equation}\label{est:global}
\begin{split}
\big\|\sup_{t\in\R}|e^{it\mathcal{L}_{\mathbf{A}}^{\frac\alpha2}}f|\big\|_{L^q(\R^2)}\leq C\|f\|_{\dot{H}_{\mathbf{A}}^s(\R^2)}.
\end{split}
\end{equation}

\end{theorem}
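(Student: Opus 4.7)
The plan is to deduce Theorem~\ref{thm:global} from the local bound of Theorem~\ref{thm:local} via a Littlewood--Paley decomposition combined with a scaling argument; the hypothesis $s>1-\tfrac{2}{q}$ is exactly the scaling-critical threshold for \eqref{est:global}. Since $\mathbf A(\hat x)$ depends only on the angular variable, $\mathcal L_{\mathbf A}$ is homogeneous of degree~$2$, and the identity $e^{it\mathcal L_{\mathbf A}^{\alpha/2}}(f(\lambda\,\cdot))(x)=(e^{i\lambda^{\alpha}t\mathcal L_{\mathbf A}^{\alpha/2}}f)(\lambda x)$ rescales a frequency-$2^k$ piece to a frequency-$1$ piece, making the $L^q$--$\dot{H}^s_{\mathbf A}$ inequality scale invariant precisely at $s=1-\tfrac{2}{q}$.

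I would fix spectral Littlewood--Paley projections $P_k=\phi_k(\mathcal L_{\mathbf A}^{1/2})$ with $\phi_k$ supported in $[2^{k-1},2^{k+1}]$, so that $\|f\|_{\dot{H}^s_{\mathbf A}}^{2}\sim\sum_k\|P_k f\|_{\dot{H}^s_{\mathbf A}}^{2}$ by orthogonality, and apply the triangle inequality for $\sup_t$. The scaling above reduces the theorem to the single frequency-$1$ claim
\[
\Bigl\|\sup_{t\in\R}|e^{it\mathcal L_{\mathbf A}^{\alpha/2}}g|\Bigr\|_{L^q(\R^2)}\le C\|g\|_{L^2(\R^2)}\qquad\bigl(g=\phi_0(\mathcal L_{\mathbf A}^{1/2})g,\;q\ge 6\bigr),
\]
since its rescaled form $\|\sup_t|e^{it\mathcal L_{\mathbf A}^{\alpha/2}}P_k f|\|_{L^q}\le C\,2^{k(1-2/q-s)}\|P_k f\|_{\dot{H}^s_{\mathbf A}}$ can be summed in $k$ by Cauchy--Schwarz once $s>1-\tfrac{2}{q}$.

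The principal ingredient is thus the frequency-$1$ global bound, which I would establish in two steps. First, since $g$ is spectrally localized at $\sim 1$, the solution $u(t,x)=e^{it\mathcal L_{\mathbf A}^{\alpha/2}}g(x)$ is band-limited in time: $\widehat{u(\cdot,x)}(\tau)$ lives in the image of $\mathrm{supp}\,\phi_0$ under $\lambda\mapsto\lambda^{\alpha}$, a compact subset of $\{|\tau|\sim 1\}$. Applying Sobolev embedding $H^1(I)\hookrightarrow L^\infty(I)$ on each unit interval $I_\ell=[\ell,\ell+1]$ together with the identity $\partial_t u=i\mathcal L_{\mathbf A}^{\alpha/2}u$ (which preserves the frequency-$1$ localization), one obtains pointwise in $x$
\[
\sup_{t\in\R}|u(t,x)|^2\le\sum_\ell\sup_{I_\ell}|u(\cdot,x)|^2\le C\Bigl(\|u(\cdot,x)\|_{L^2_t(\R)}^2+\|\mathcal L_{\mathbf A}^{\alpha/2}u(\cdot,x)\|_{L^2_t(\R)}^2\Bigr),
\]
so taking the $L^{q/2}_x$-norm ($q\ge 2$) reduces the maximal estimate to the reversed-Strichartz bound $\|e^{it\mathcal L_{\mathbf A}^{\alpha/2}}g\|_{L^q_x L^2_t(\R^2\times\R)}\le C\|g\|_{L^2(\R^2)}$, applied both to $g$ and to $\mathcal L_{\mathbf A}^{\alpha/2}g$.

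The reversed-Strichartz bound is the main obstacle. I would prove it by a $TT^{*}$ argument: the question reduces to $L^{q'}_x L^2_t\to L^q_x L^2_t$ boundedness of the operator with Schwartz kernel $\int_0^\infty e^{i(s-t)\lambda^\alpha}\phi_0(\lambda)^2\,dE_\lambda(x,y)$. A Fourier transform in~$t$ together with the change of variables $\tau=\lambda^{\alpha}$ converts this into a bump multiple of the spectral measure $dE_{\tau^{1/\alpha}}(x,y)$ localized at $\tau\sim 1$, and Plancherel then reduces the estimate, in the range $q\ge 6$, to the cluster spectral measure bounds developed earlier in the paper, in the spirit of \cite{FZZ,SSWZ}. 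Without this band-limited-in-time reduction the local bound of Theorem~\ref{thm:local} could not be extended globally---a naive decomposition into infinitely many unit-length intervals diverges---so the spectral-measure input is where the genuinely new work lies.
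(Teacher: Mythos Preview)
Your reduction to a single frequency via scaling and the reversed Strichartz estimate at unit frequency are both sound, and the spectral-measure input you identify is indeed what drives the paper's argument as well. The gap is in the final summation. After rescaling you obtain
\[
\Bigl\|\sup_{t\in\R}\bigl|e^{it\mathcal L_{\mathbf A}^{\alpha/2}}P_k f\bigr|\Bigr\|_{L^q}\le C\,2^{k(1-2/q)}\|P_k f\|_{L^2}
= C\,2^{-k\delta}\,\|P_k f\|_{\dot H^{s}_{\mathbf A}},\qquad \delta:=s-\bigl(1-\tfrac2q\bigr)>0,
\]
and then claim Cauchy--Schwarz closes the sum over $k$. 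But the Littlewood--Paley decomposition for $\dot H^{s}_{\mathbf A}$ runs over \emph{all} $k\in\Z$, and the weight $2^{-k\delta}$ is not in $\ell^2(\Z)$: it blows up as $k\to-\infty$. Equivalently, your argument proves only the Besov bound $\|\sup_t|u|\|_{L^q}\lesssim\|f\|_{\dot B^{\,1-2/q}_{2,1}}$, and $\dot H^{s}$ does not embed into $\dot B^{\,1-2/q}_{2,1}$ for any $s$. The triangle inequality you apply to $\sup_t$ destroys the $\ell^2$ orthogonality that the homogeneous Sobolev norm carries, and no choice of $s$ repairs this at low frequencies.

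The paper avoids this problem by a different route: rather than dyadic scaling, it runs an $\epsilon$-rescaled version of the local argument. For each $\epsilon\in(0,1)$ it decomposes $f=\sum_{k\ge0}\chi_{k\epsilon}^{\alpha}f$ into spectral bands of width $\epsilon$, applies a cut-off $\rho(\epsilon t)$ in time, and uses the compact support of $\hat\rho$ to obtain \emph{orthogonality} of the pieces $F_k^\epsilon$ in $L^2_t$ (not just a triangle inequality). This preserves the $\ell^2$ structure, the sum is only over $k\ge0$, and the cluster estimate \eqref{spec-2} yields $\|(\sqrt{\mathcal L_{\mathbf A}}+\epsilon I)^{s}f\|_{L^2}$ on the right with a constant independent of $\epsilon$; letting $\epsilon\to0$ gives the homogeneous norm. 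If you want to salvage the scaling approach, you would need to replace the $\ell^1$ triangle inequality for $\sup_t$ by an argument that retains $\ell^2$ summability in $k$---for instance by exploiting the disjointness of the time-frequency supports of the $u_k$ directly, as the paper does through $\hat\rho$.
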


\begin{remark} These results extend the local maximal estimates in Theorem \ref{thm:local} to global ones when $q\geq6$.
In the case of $\alpha=1$, which corresponds to the wave equation with a magnetic field,
the global maximal estimate is sharp up to the endpoint regularity, we refer to~\cite[Theorem 2]{RV}.
In the Schr\"odinger case $\alpha=2$, from \cite[Theorem 8]{RV1}, the regularity assumption $s>1-\frac2q$ is necessary for \eqref{est:global} up to the endpoint $s=1-\frac2q$.
\end{remark}

We close this section by briefly stating the organization of the whole paper as follows.
In Sec.2, we provide the main tools needed in the proof of our main results, i.e., the spectral measure estimates.
In Sec.3 and 4, we prove Theorem \ref{thm:local} and Theorem \ref{thm:global} respectively.
\vspace{0.1cm}

\section{preliminaries}

In this section, we provide the key spectral cluster estimates, which play a critical role in our proof.
For $0<\epsilon\leq 1$, we define the spectral projector associated with the magnetic Schr\"odinger
operator $\mathcal{L}_{\mathbf{A}}$ on the frequencies $[k,k+\epsilon]$ by
\begin{equation}\label{chi-k-epsilon}
\chi_{k,\epsilon}(\mathcal{L}_{\mathbf{A}}^{\frac\alpha2})=\int_0^\infty \chi_{[k,k+\epsilon]}(\lambda^{\alpha}) \, dE_{\sqrt{\mathcal{L}_{\mathbf{A}}}}(\lambda),\qquad \alpha>0,
\end{equation}
where $\chi_E(s)$ is the characteristic function on the set $E$ and $dE_{\sqrt{\mathcal{L}_{\mathbf{A}}}}(\lambda)$ is the spectral measure given by
\begin{equation}\label{resolvent}
dE_{\sqrt{\mathcal{L}_{\mathbf{A}}}}(\lambda)=\frac{d}{d\lambda}E_{\sqrt{\mathcal{L}_{\mathbf{A}}}}(\lambda)d\lambda
=\frac{\lambda}{i\pi}\big(R(\lambda+i0)-R(\lambda-i0)\big)d\lambda.
\end{equation}
Here $R(\lambda\pm i0)$ is the (outcoming/ingoing) resolvent
$$R(\lambda\pm i0)=(\mathcal{L}_{\mathbf{A}}-(\lambda^2\pm i0))^{-1}=\lim_{\epsilon\searrow0}\big(\mathcal{L}_{\mathbf{A}}-(\lambda^2\pm i\epsilon)\big)^{-1}.$$
Very recently,  Fanelli-Zhang-Zheng \cite{FZZ} used the spectral measure constructed in \cite{GYZZ} to prove
the following resolvent estimates.

\begin{lemma}\cite[Theorem 3.1]{FZZ}\label{lem:2.1}
Let $1\leq p<\frac43$ and $4<q\leq \infty$ satisfy $\frac23\leq \frac1p-\frac1q<1$.
% that is, $(\frac1p,\frac1q)$ is contained in the pentagon (i.e. Figure \ref{figure1}).
Then there exists a constant $C=C(p,q)>0$ such that
\begin{equation}\label{resolvent-estimate}
\|(\mathcal{L}_{\mathbf{A}}-(\lambda^2\pm i0))^{-1}f\|_{L^q(\mathbb{R}^2)}\leq C\lambda^{ 2(\frac1p-\frac1q)-2}\|f\|_{L^p(\mathbb{R}^2)}, \quad
\end{equation}
where $(\mathcal{L}_{\mathbf{A}}-(\lambda^2\pm i0))^{-1}:=\lim_{\epsilon\to0^+}(\mathcal{L}_{\mathbf{A}}-(\lambda^2\pm i\epsilon))^{-1}$.

In particular, if $6\leq q<\infty$, then \eqref{resolvent-estimate} holds for $q=p'$.
\end{lemma}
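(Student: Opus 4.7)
The strategy I would pursue mirrors the Kenig–Ruiz–Sogge approach to uniform resolvent estimates for $-\Delta$ on $\mathbb{R}^n$, with the Fourier transform replaced by the generalized Hankel-type transform that diagonalizes $\mathcal{L}_{\mathbf{A}}$. My first move is a scaling reduction. The transversality condition \eqref{eq:transversal} and the homogeneity of $\mathbf{A}(\hat x)/|x|$ make $\mathcal{L}_{\mathbf{A}}$ homogeneous of degree $-2$, so conjugation by dilations $f\mapsto f(\lambda\,\cdot)$ sends $R(\lambda^2\pm i0)$ to $\lambda^{-2} R(1\pm i0)$, and a bookkeeping on the $L^p\to L^q$ operator norms under the same dilation yields exactly the factor $\lambda^{2(1/p-1/q)-2}$. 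Thus it suffices to prove the estimate at $\lambda=1$.

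Next, I would localize the resolvent spectrally. Using separation of variables in polar coordinates and the twisted spherical operator arising from $\mathbf{A}$ (as constructed in \cite{GYZZ}), $\mathcal{L}_{\mathbf{A}}$ admits a direct-sum decomposition $\bigoplus_{k} \mathcal{H}_{\nu_k}$, where each summand is a Bessel-type operator on $(0,\infty)$ of order $\nu_k$; the sequence $\{\nu_k\}$ is determined by the eigenvalues of the operator $(-i\partial_\theta+\mathbf{A}(\hat x))^2$ on $\mathbb{S}^1$. The distorted Hankel/Fourier transform in this basis gives an explicit integral representation of the spectral measure $dE_{\sqrt{\mathcal{L}_{\mathbf{A}}}}(\mu)$. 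I would then introduce a dyadic partition around the singular frequency $\mu=1$: write
\begin{equation*}
R(1\pm i0)\;=\;\sum_{j\geq 0} R_j^{\mathrm{reg}} \;+\; R^{\mathrm{sing}},
\end{equation*}
where $R_j^{\mathrm{reg}}$ integrates the spectral measure against the smooth cutoff $\varphi_j(\mu)$ supported in $|\mu-1|\sim 2^{-j}$ (plus the complementary smooth part away from $\mu=1$), and $R^{\mathrm{sing}}$ collects the Cauchy-principal-value piece with the $\pm i0$ singularity.

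The heart of the proof is then a Stein–Tomas-type cluster estimate: one shows that for the projector $\chi_{1,\epsilon}(\mathcal{L}_{\mathbf{A}}^{1/2})$ defined in \eqref{chi-k-epsilon},
\begin{equation*}
\big\|\chi_{1,\epsilon}(\mathcal{L}_{\mathbf{A}}^{1/2})\big\|_{L^p(\mathbb{R}^2)\to L^{p'}(\mathbb{R}^2)}\;\leq\;C\,\epsilon,\qquad 1\leq p\leq \tfrac{6}{5},
\end{equation*}
which is the analogue of the restriction theorem for the circle. This is proved from the GYZZ formula for $dE_{\sqrt{\mathcal{L}_{\mathbf{A}}}}$ by exploiting the uniform asymptotics of Bessel/Hankel functions $J_{\nu_k},H^{(1)}_{\nu_k}$ together with summation in $k$, and by carrying out a stationary-phase/$TT^*$ analysis on the oscillatory kernel. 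Summing the dyadic pieces $R_j^{\mathrm{reg}}$ against this cluster bound yields the diagonal estimate on the line $q=p'$ with $\tfrac{2}{3}\leq \tfrac{1}{p}-\tfrac{1}{p'}<1$, i.e.\ $6\leq q<\infty$; the singular piece is controlled via a logarithmic weight that is absorbed by the strict inequality $\tfrac{1}{p}-\tfrac{1}{q}<1$. The off-diagonal range $1\leq p<4/3,\ 4<q\leq\infty$ with $\tfrac{2}{3}\leq\tfrac{1}{p}-\tfrac{1}{q}<1$ is then recovered by interpolation with a trivial Sobolev-type endpoint (e.g.\ composing with the fractional operator $(\mathcal{L}_{\mathbf{A}})^{-s}$ and invoking a magnetic Sobolev embedding).

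The main obstacle will be the cluster spectral projection bound: the magnetic twisting shifts the Bessel parameters $\nu_k$ off the integers, and one must obtain estimates on the kernel of $dE_{\sqrt{\mathcal{L}_{\mathbf{A}}}}(\mu)$ that are uniform in $k$ while keeping the correct $\mu$-dependence. Once this quantitative restriction-type estimate is in hand, the passage to the uniform resolvent bound is a fairly standard $TT^*$ and summation argument, parallel to the proof of Kenig–Ruiz–Sogge in the flat setting.
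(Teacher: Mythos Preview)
This lemma is not proved in the paper at all: it is quoted verbatim as \cite[Theorem 3.1]{FZZ}, and the only indication of its provenance is the sentence ``Fanelli--Zhang--Zheng \cite{FZZ} used the spectral measure constructed in \cite{GYZZ} to prove the following resolvent estimates.'' There is therefore no proof in the present paper to compare your proposal against.

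That said, your outline is broadly consistent with what the paper reports about the argument in \cite{FZZ}: the scaling reduction to $\lambda=1$ is immediate from the degree-$-2$ homogeneity of $\mathcal{L}_{\mathbf{A}}$, and the core input is indeed the pointwise/oscillatory structure of the spectral measure kernel $dE_{\sqrt{\mathcal{L}_{\mathbf{A}}}}(\mu;x,y)$ established in \cite{GYZZ}, from which a Stein--Tomas/Kenig--Ruiz--Sogge type $TT^*$ argument produces the $L^p\to L^{p'}$ resolvent bound on the line $6\leq q=p'<\infty$. Your identification of the cluster bound as the crux is accurate. One point to be careful with: the off-diagonal range $\tfrac23\leq\tfrac1p-\tfrac1q<1$ with $p<\tfrac43$, $q>4$ is not obtained in \cite{FZZ} by interpolating the self-dual line against a ``trivial Sobolev endpoint'' as you suggest, but rather by establishing separate $L^1\to L^q$ and $L^p\to L^\infty$ bounds directly from the kernel estimates and then interpolating; your proposed route via a magnetic Sobolev embedding would require care near the origin because of the singularity of $\mathbf{A}(\hat x)/|x|$.
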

Therefore, from \eqref{resolvent} and \eqref{resolvent-estimate}, we have the following spectral measure estimate
\begin{equation}
 \| dE_{\sqrt{\mathcal{L}_{\mathbf{A}}}}(\lambda)\|_{L^{p}(\R^2)\to L^q(\R^2)}\leq C\lambda^{ 2(\frac1p-\frac1q)-1}.
 \end{equation}

Integrating on a frequency band of width $\epsilon$, we obtain

\begin{lemma}\label{lem:2.2}
For the operator $\chi_{k,\epsilon}(\mathcal{L}_{\mathbf{A}}^{\frac\alpha2})$ defined by \eqref{chi-k-epsilon}, we have
\begin{equation}\label{spec-1}
\|\chi_{k,\epsilon}(\mathcal{L}_{\mathbf{A}}^{\frac\alpha2})\|_{L^{p}(\R^2)\to L^q(\R^2)} \leq C \big((k+\epsilon)^{\frac1\alpha}-k^{\frac1\alpha}\big) (k+\epsilon)^{ (2(\frac1p-\frac1q)-1)/\alpha},
\end{equation}
where $1\leq p<\frac43$ and $4<q\leq \infty$ satisfy $\frac23\leq \frac1p-\frac1q<1$.

In particular, for $6\leq q=p'<+\infty$, then
\begin{equation}\label{spec-2}
\|\chi_{k,\epsilon}(\mathcal{L}_{\mathbf{A}}^{\frac\alpha 2})\|_{L^{p}(\R^2)\to L^{p'}(\R^2)} \leq C \big((k+\epsilon)^{\frac1\alpha}-k^{\frac1\alpha}\big) (k+\epsilon)^{ (2(\frac1p-\frac1{p'})-1)/\alpha}.
\end{equation}

\end{lemma}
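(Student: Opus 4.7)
The plan is to recognize $\chi_{k,\epsilon}(\mathcal{L}_{\mathbf{A}}^{\alpha/2})$ as the spectral projection of $\sqrt{\mathcal{L}_{\mathbf{A}}}$ onto the $\lambda$-band corresponding to $[k,k+\epsilon]$ under the substitution $\lambda\mapsto\lambda^{\alpha}$, and then to integrate the pointwise-in-$\lambda$ operator bound on $dE_{\sqrt{\mathcal{L}_{\mathbf{A}}}}(\lambda)$ supplied by Lemma~\ref{lem:2.1} across this interval.

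First I would unfold \eqref{chi-k-epsilon}. Since $\chi_{[k,k+\epsilon]}(\lambda^{\alpha}) = \chi_{[k^{1/\alpha},(k+\epsilon)^{1/\alpha}]}(\lambda)$ for $\lambda\ge 0$, this gives
\begin{equation*}
\chi_{k,\epsilon}(\mathcal{L}_{\mathbf{A}}^{\alpha/2}) = \int_{k^{1/\alpha}}^{(k+\epsilon)^{1/\alpha}} dE_{\sqrt{\mathcal{L}_{\mathbf{A}}}}(\lambda).
\end{equation*}
Combining \eqref{resolvent} with the resolvent bound \eqref{resolvent-estimate} (picking up one extra power of $\lambda$ from \eqref{resolvent}) yields the operator-valued density estimate
\begin{equation*}
\Bigl\| \tfrac{d}{d\lambda} E_{\sqrt{\mathcal{L}_{\mathbf{A}}}}(\lambda) \Bigr\|_{L^{p}(\mathbb{R}^2)\to L^{q}(\mathbb{R}^2)} \le C\,\lambda^{2(1/p-1/q)-1},
\end{equation*}
valid in the admissible range $1\le p<4/3$, $4<q\le\infty$, $2/3\le 1/p-1/q<1$.

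Next I would pair with an arbitrary $g\in L^{q'}$, use the sesquilinear form of the spectral measure, and apply Minkowski's inequality in $\lambda$ to pull the operator norm inside the integral. The hypothesis $1/p-1/q\ge 2/3>1/2$ forces the exponent $2(1/p-1/q)-1$ to be positive, so the integrand is increasing on $[k^{1/\alpha},(k+\epsilon)^{1/\alpha}]$; bounding it there by its value at the right endpoint $(k+\epsilon)^{(2(1/p-1/q)-1)/\alpha}$ and multiplying by the length $(k+\epsilon)^{1/\alpha}-k^{1/\alpha}$ of the interval reproduces exactly \eqref{spec-1}. The estimate \eqref{spec-2} is then the specialization to the diagonal $q=p'$ in the range $6\le q<\infty$ permitted by the last clause of Lemma~\ref{lem:2.1}.

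I do not expect a serious obstacle here: the argument is essentially a change of variable plus Minkowski. The only mildly delicate point is rigorously upgrading the pointwise-in-$\lambda$ operator bound on the resolvent jump to an integrated bound on the spectral cluster, which is routine by duality once \eqref{resolvent-estimate} is in hand, together with the sign check on the exponent that legitimizes the crude ``length $\times$ maximum'' bound on the $\lambda$-integral.
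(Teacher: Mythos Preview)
Your proposal is correct and matches the paper's approach exactly: the paper derives the density bound $\|dE_{\sqrt{\mathcal{L}_{\mathbf{A}}}}(\lambda)\|_{L^p\to L^q}\le C\lambda^{2(1/p-1/q)-1}$ from \eqref{resolvent} and \eqref{resolvent-estimate}, then simply states that integrating over the frequency band gives Lemma~\ref{lem:2.2} without writing out the details you have supplied. Your added observations (the change of variable $\lambda\mapsto\lambda^\alpha$, the positivity of the exponent making ``length $\times$ maximum'' legitimate) are precisely what is implicit in the paper's one-line justification.
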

To prove the local result in Theorem \eqref{thm:local} for the range $2\leq q\leq6$, here we prove the following lemma.
\begin{lemma}\label{lem:L2-est}
Let $U(t)=e^{it\mathcal{L}_{\mathbf{A}}^{\frac\alpha2}}$, then there exists a constant $C$ such that $$\| D_t^{s}U(t)\|_{\dot H_{\bf A}^{\alpha s}\to L^2}\leq C,$$
where $0\leq s\leq 1$.
\end{lemma}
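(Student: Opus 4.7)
The plan is to reduce everything to the spectral calculus of $\mathcal{L}_{\mathbf{A}}$. Since $\mathcal{L}_{\mathbf{A}}$ is a nonnegative self-adjoint operator on $L^2(\mathbb{R}^2)$ (cf.\ \cite{FFFP}), the spectral theorem furnishes a projection-valued measure $dE_{\sqrt{\mathcal{L}_{\mathbf{A}}}}(\lambda)$ for which
\begin{equation*}
U(t)f=\int_0^\infty e^{it\lambda^{\alpha}}\,dE_{\sqrt{\mathcal{L}_{\mathbf{A}}}}(\lambda)f,\qquad f\in L^2(\mathbb{R}^2).
\end{equation*}
The first step is to interpret $D_t^s$ as the Fourier multiplier $|\tau|^s$ in the time variable and to pass it under the spectral integral, yielding the identification
\begin{equation*}
D_t^s U(t)f=\int_0^\infty \lambda^{\alpha s}\,e^{it\lambda^{\alpha}}\,dE_{\sqrt{\mathcal{L}_{\mathbf{A}}}}(\lambda)f=\mathcal{L}_{\mathbf{A}}^{\alpha s/2}U(t)f,
\end{equation*}
which converts the fractional time derivative into a spatial fractional power of $\mathcal{L}_{\mathbf{A}}$.

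With this identification in hand the desired bound is immediate from two elementary properties: $U(t)$ is unitary on $L^2$, and it commutes with every Borel function of $\mathcal{L}_{\mathbf{A}}$. Plancherel for the spectral measure then gives
\begin{equation*}
\|D_t^s U(t)f\|_{L^2}=\|\mathcal{L}_{\mathbf{A}}^{\alpha s/2}U(t)f\|_{L^2}=\|U(t)\mathcal{L}_{\mathbf{A}}^{\alpha s/2}f\|_{L^2}=\|\mathcal{L}_{\mathbf{A}}^{\alpha s/2}f\|_{L^2}=\|f\|_{\dot H_{\mathbf{A}}^{\alpha s}},
\end{equation*}
uniformly in $t$ and in $s\in[0,1]$, which is the claimed estimate with $C=1$.

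A second, more interpolation-oriented route would verify the two endpoints by hand: at $s=0$, unitarity gives $\|U(t)f\|_{L^2}=\|f\|_{L^2}$, while at $s=1$, differentiating the semigroup yields $\partial_t U(t)f=i\mathcal{L}_{\mathbf{A}}^{\alpha/2}U(t)f$ and hence $\|\partial_t U(t)f\|_{L^2}=\|f\|_{\dot H_{\mathbf{A}}^{\alpha}}$. Stein's complex interpolation applied to the analytic family $T_z f := D_t^z U(t)\mathcal{L}_{\mathbf{A}}^{-\alpha z/2}f$ on the strip $0\leq \Re z\leq 1$ would then cover every intermediate $s$. The only mildly delicate point in either approach is giving rigorous meaning to $D_t^s$ despite the fact that $t\mapsto U(t)f$ does not decay in $t$; the spectral-calculus interpretation above sidesteps this issue without any integration-by-parts argument, so I expect no genuine obstacle in the write-up.
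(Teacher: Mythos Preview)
Your argument is correct, and in fact more direct than the paper's. Both you and the paper begin with the same spectral identification
\[
D_t^{s}U(t)=\int_0^\infty e^{it\lambda^{\alpha}}\lambda^{\alpha s}\,dE_{\sqrt{\mathcal{L}_{\mathbf A}}}(\lambda)=\mathcal{L}_{\mathbf A}^{\alpha s/2}U(t),
\]
but from there the paper proceeds by a Littlewood--Paley decomposition $T^s(t)=\sum_{k\in\mathbb Z}T^s_k(t)$, reduces to the dyadic bound $\|T^s_k(t)\|_{L^2\to L^2}\lesssim 2^{\alpha s k}$, passes to the $TT^*$ kernel, and then invokes the pointwise spectral-measure expansion of \cite{GYZZ} together with $K$-fold integration by parts to obtain the Schwartz-type decay
\[
\Big|\int_0^\infty \psi(2^{-k}\lambda)\lambda^{2\alpha s}\,dE_{\sqrt{\mathcal{L}_{\mathbf A}}}(\lambda;x,y)\Big|\lesssim \frac{2^{2k}2^{2\alpha sk}}{(1+2^k|x-y|)^K}.
\]
Schur's test then gives the $L^2$ bound.

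Your route bypasses all of this: once $D_t^sU(t)=U(t)\mathcal{L}_{\mathbf A}^{\alpha s/2}$ is recognized, unitarity of $U(t)$ gives the conclusion with $C=1$ immediately, using nothing beyond abstract spectral calculus for a nonnegative self-adjoint operator. The paper's approach has the advantage of producing, along the way, a quantitative kernel bound \eqref{est:L2-ker} that could in principle be reused elsewhere, but for the statement of the lemma itself that machinery is unnecessary. Your handling of the one genuinely delicate point---that $t\mapsto U(t)f$ is not in any $L^p_t$ and so $|D_t|^s$ must be \emph{defined} via the spectral integral rather than via the tempered Fourier transform---is exactly the right resolution, and is implicitly what the paper does as well when it writes $T^s(t)=\int_0^\infty e^{it\lambda^\alpha}\lambda^{\alpha s}\,dE(\lambda)$.
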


\begin{proof} By the interpolation, we only need to consider $s=0$ and $s=1$. By the spectral theory, we have
\begin{equation*}
T^s(t):=D_t^{s} U(t)=\int_0^\infty e^{it\lambda^\alpha} \lambda^{\alpha s}\,dE_{\sqrt{\mathcal{L}_{\mathbf{A}}}}(\lambda).
\end{equation*}
For $\varphi\in C_c^\infty([1,2])$, we define
\begin{equation}
T^s_k(t)=\int_0^\infty e^{it\lambda^\alpha}\varphi(2^{-k}\lambda)\lambda^{\alpha s} \,dE_{\sqrt{\mathcal{L}_{\mathbf{A}}}}(\lambda),
\end{equation}
then $T^s(t)=\sum_{k\in\Z} T^s_k(t)$. By orthogonality, we have
\begin{equation*}
\|T^s(t)f\|^2_{L^2(\R^2)}=\sum_{k\in\Z}\|T^s_k(t)f\|^2_{L^2(\R^2)}=\sum_{k\in\Z}\|T^s_k(t) f_k\|^2_{L^2(\R^2)},
\end{equation*}
where $f_k=\tilde{\varphi}(2^{-k}\sqrt{\mathcal{L}_{\mathbf{A}}}) f$ with $\tilde{\varphi}\in C_c^\infty([\frac12,4])$ such that %$\tilde{\varphi}\varphi=1$.
$\tilde{\varphi}=1$ on supp\,$\varphi$.
It thus suffices to prove
$$\|T^s_k(t)\|_{L^2\to L^2}\leq C2^{\alpha s k},$$
which is equivalent to $$\|\big(T^s_k(t)\big)^*T^s_k(t)\|_{L^2\to L^2}\leq C^22^{2\alpha s k}.$$
To this end, we set $\psi=\varphi^2 \in C_c^\infty([1, 2])$, then it is further enough to show
\begin{equation}\label{est:L2-ker}
\begin{split}
\Big|\int^\infty_0 \psi(2^{-k}\lambda)\lambda^{2\alpha s} dE_{\sqrt{\mathcal{L}_{\bf A}}}(\lambda;x,y)\Big|\lesssim \frac{2^{2k}2^{2\alpha s k}}{(1+2^k|x-y|)^{K}},\quad k\in\Z,\quad\forall K\geq0,
\end{split}
\end{equation}
where $C$ is a constant independent of $k\in \Z$.
On the one hand, by scaling,
\begin{equation}\label{scaling-L2}
\begin{split}
\Big|\int^\infty_0 \psi(2^{-k}\lambda)\lambda^{2\alpha s} &dE_{\sqrt{\mathcal{L}_{\bf A}}}(\lambda;x,y)\Big|\\
&=2^{2k}2^{2\alpha s k}\Big|\int^\infty_0 \psi(\lambda)\lambda^{2\alpha s} dE_{\sqrt{\mathcal{L}_{\bf A}}}(\lambda; 2^kx, 2^ky)\Big|.
\end{split}
\end{equation}
On the other,  by \cite[Proposition 1.1]{GYZZ} and integration by parts K times, we have
\begin{equation*}
\begin{split}
\Big|\int^\infty_0 \psi(\lambda) \lambda^{2\alpha s}dE_{\sqrt{\mathcal{L}_{\bf A}}}(\lambda;x,y)\Big|\lesssim  \frac{1}{(1+|x-y|)^{K}}+\int_0^\infty \frac{1}{(1+|d_s|)^{K}} |B_{\alpha}(s,\theta_1,\theta_2)| ds,
\end{split}
\end{equation*}
where for the definition of $d_s$ and $B_{\alpha}(s,\theta_1,\theta_2)$ we can refer to \cite{GYZZ}.
From \cite[(1.25)]{GYZZ}, one has $|d_s|^2\geq (r_1+r_2)^2\geq |x-y|^2$.
Hence, it follows from \cite[(3.3)]{GYZZ} that
\begin{equation*}
\begin{split}
\Big|\int^\infty_0 \psi(\lambda) \lambda^{2\alpha s} dE_{\sqrt{\mathcal{L}_{\bf A}}}(\lambda;x,y)\Big|\lesssim  \frac{1}{(1+|x-y|)^{K}}.
\end{split}
\end{equation*}
This together with \eqref{scaling-L2} yields \eqref{est:L2-ker}.

\end{proof}

\section{The proof of Theorem \ref{thm:local}}

In this section, we shall prove the local maximal estimate associated with the magnetic operator $\mathcal{L}_{\mathbf{A}}$.
We need to prove the following local-in-time estimate
\begin{equation}\label{local-1}
\begin{split}
\big\|e^{it\mathcal{L}_{\mathbf{A}}^{\frac\alpha2}}f\big\|_{L^q(\mathbb{R}^2;L^\infty([-1,1]))}\lesssim\|f\|_{H_{\mathbf{A}}^s(\mathbb{R}^2)}.
\end{split}
\end{equation}
To this end, we first observe that there exists a Schwarz function $\rho\in\mathcal{S}(\mathbb{R})$ with Fourier support $\text{supp}\,\hat{\rho}\subset(-2,2)$ such that
\begin{equation}\label{local-global}
\big\|e^{it\mathcal{L}_{\mathbf{A}}^{\frac\alpha2}}f\big\|_{L^q(\mathbb{R}^2;L^\infty([-1,1]))}\lesssim\big\|\rho(t) e^{it\mathcal{L}_{\mathbf{A}}^{\frac\alpha2}}f\big\|_{L^q(\mathbb{R}^2;L^\infty(\mathbb{R}))}.
\end{equation}
Then to prove ~\eqref{local-1}~, it is sufficient to show
\begin{equation}\label{global-1}
\big\|\rho(t) e^{it\mathcal{L}_{\mathbf{A}}^{\frac\alpha2}}f\big\|_{L^q(\mathbb{R}^2;L^\infty(\mathbb{R}))}\lesssim\|(I+\mathcal{L}_{\mathbf{A}})^{\frac{s}{2}}f\|_{L^2(\mathbb{R}^2)}.
\end{equation}
In the case that $2\leq q\leq 6$, by interpolation, it suffices to prove
\begin{equation}\label{q=2}
\begin{split}
\big\|\rho(t)e^{it\mathcal{L}_{\mathbf{A}}^{\frac\alpha2}}f\big\|_{L^2(\mathbb{R}^2;L^\infty(\R))}\lesssim\|f\|_{H_{\mathbf{A}}^{\sigma\alpha}(\mathbb{R}^2)},\quad \sigma>\frac{1}{2}
\end{split}
\end{equation}
and
\begin{equation}\label{q=6}
\begin{split}
\big\|\rho(t) e^{it\mathcal{L}_{\mathbf{A}}^{\frac\alpha2}}f\big\|_{L^6(\mathbb{R}^2;L^\infty(\R))}
\lesssim\|f\|_{H_{\mathbf{A}}^{\sigma}(\mathbb{R}^2)},\quad \sigma>\frac{2}{3}.%\tfrac{2}{3}.
\end{split}
\end{equation}
The inequality \eqref{q=2} is a consequence of Lemma \ref{lem:L2-est} and the Sobolev embedding $\dot{H}_t^{\frac{1}{2}-\epsilon}(\R)\cap \dot{H}_t^{\frac{1}{2}+\epsilon}(\R)\hookrightarrow L^\infty_t(\R)$ with $0<\epsilon\ll1$.
Indeed, for $\sigma>\frac{1}{2}$, by Lemma \ref{lem:L2-est} we have
\begin{align*}
  \big\|\rho(t)e^{it\mathcal{L}_{\mathbf{A}}^{\frac\alpha2}}f\big\|_{L^2(\mathbb{R}^2;L^\infty(\R))}
  &=\big\|e^{it\mathcal{L}_{\mathbf{A}}^{\frac\alpha2}}(\rho(t)f)\big\|_{L^2(\mathbb{R}^2;L^\infty(\R))}\\
  &\lesssim\big\|D_{t}^{\sigma}e^{it\mathcal{L}_{\mathbf{A}}^{\frac\alpha2}}(\rho(t)f)\big\|_{L^2(\mathbb{R}^2;L^{2}(\R))}\\
  &\lesssim\big\|\rho(t)\big\|_{L^{2}(\R)}\|f\|_{H_{\mathbf{A}}^{\sigma\alpha}(\mathbb{R}^2)}\\
  &\lesssim\|f\|_{H_{\mathbf{A}}^{\sigma\alpha}(\mathbb{R}^2)}.
\end{align*}
The inequality \eqref{q=6} is the endpoint of the following case $q\geq 6$. Therefore, from now on, we focus on the case that $6\leq q<+\infty$.

Recall \eqref{chi-k-epsilon} with $\epsilon=1$
\begin{equation}\label{k-alpha}
%\chi_{k}(\mathcal{L}_{\mathbf{A}}^{\frac\alpha2})
\chi_{k}^{\alpha}:=\chi_{k,1}(\mathcal{L}_{\mathbf{A}}^{\frac\alpha2})=\int_0^\infty \chi_{[k,k+1]}(\lambda^{\alpha}) \, dE_{\sqrt{\mathcal{L}_{\mathbf{A}}}}(\lambda),
\end{equation}
then we have the decomposition $f=\sum_{k=0}^\infty\chi_{k}^{\alpha}f$.
By \eqref{spec-2}, for $6\leq q<\infty$, we have
\begin{equation}\label{chi-k-q}
\|\chi_{k}^{\alpha}f\|_{L^q(\mathbb{R}^2)}\lesssim \big((k+1)^{\frac1\alpha}-k^{\frac1\alpha}\big)^{\frac12}(k+1)^{(\frac12-\frac2q)/\alpha}\|f\|_{L^2(\mathbb{R}^2)}, \quad \forall k\geq0.
\end{equation}
Let $0<\delta\ll1$ and
\begin{equation}\label{F}
F(t,x):=|D_t|^{1/2+\delta}(\rho(t)e^{it\mathcal{L}_{\mathbf{A}}^{\frac\alpha2}}f(x)),
\end{equation}
then by Sobolev embedding, we obtain
\begin{equation}\label{rho-F}
\big\|\rho(t) e^{it\mathcal{L}_{\mathbf{A}}^{\frac\alpha2}}f\big\|_{L^q(\mathbb{R}^2;L^\infty(\mathbb{R}))}\lesssim\|F(t,x)\|_{L^q(\mathbb{R}^2;L^2(\mathbb{R}))}.
\end{equation}
Set
\begin{equation}\label{F-k}
F_k(t,x):=|D_t|^{1/2+\delta}(\rho(t)e^{it\mathcal{L}_{\mathbf{A}}^{\frac\alpha2}}\chi_{k}^{\alpha}f(x)),
\end{equation}
then
\begin{equation}\label{F-F-k}
F(t,x)=\sum_{k=0}^\infty F_k(t,x).
\end{equation}
Taking Fourier transform in variable $t$, we get
\begin{equation}\label{Four-F}
\hat{F}_k(\tau,x)=|\tau|^{1/2+\delta}\int_0^\infty\hat{\rho}(\tau-\lambda^\alpha)\chi_{[k,k+1]}(\lambda^\alpha)dE(\lambda)f(x).
\end{equation}
By Plancherel's identity and the compact support property of $\hat{\rho}$, we have
\begin{equation}\label{orth-F}
\int_{\mathbb{R}}F_k(t,x)\overline{F_\ell(t,x)}dt=(2\pi)^{-1}\int_{\mathbb{R}}\hat{F}_k(\tau,x)\overline{\hat{F}_\ell(\tau,x)}d\tau=0,\quad |k-\ell|>100.
\end{equation}
Therefore we get
\begin{equation}\label{orth-L2}
\|F(t,x)\|_{L_t^2(\mathbb{R})}^2\lesssim\sum_{k=0}^\infty\|F_k(t,x)\|_{L_t^2(\mathbb{R})}^2=(2\pi)^{-1}\sum_{k=0}^\infty\|\hat{F}_k(\tau,x)\|_{L_\tau^2(\mathbb{R})}^2.
\end{equation}
This together with \eqref{rho-F} gives
\begin{equation}\label{orth-rho}
\big\|\rho(t) e^{it\mathcal{L}_{\mathbf{A}}^{\frac\alpha2}}f\big\|_{L^q(\mathbb{R}^2;L^\infty(\mathbb{R}))}\lesssim\big\|\big(\sum_{k=0}^\infty
\|\hat{F}_k(\tau,x)\|_{L_\tau^2(\mathbb{R})}^2\big)^{1/2}\big\|_{L^q(\mathbb{R}^2)}.
\end{equation}
Notice that $q\geq6$ and \eqref{Four-F}, we obtain
\begin{align}\label{Minkowski}
&\big\|\rho(t) e^{it\mathcal{L}_{\mathbf{A}}^{\frac\alpha2}}f\big\|^2_{L^q(\mathbb{R}^2;L^\infty(\mathbb{R}))}
\lesssim\sum_{k=0}^\infty\int_{\mathbb{R}}\|\hat{F}_k(\tau,x)\|^2_{L^q(\mathbb{R}^2)}d\tau\\
&=\sum_{k=0}^\infty\int_{\mathbb{R}}|\tau|^{1+2\delta}\Big\|\int_0^\infty\hat{\rho}(\tau-\lambda^\alpha)\chi_{[k,k+1]}
(\lambda^\alpha)dE(\lambda)f(x)\Big\|^2_{L^q(\mathbb{R}^2)}d\tau.\nonumber
\end{align}
By \eqref{chi-k-q}, we thus get
\begin{align*}
&\big\|\rho(t) e^{it\mathcal{L}_{\mathbf{A}}^{\frac\alpha2}}f\big\|^2_{L^q(\mathbb{R}^2;L^\infty(\mathbb{R}))} \\
&\lesssim\sum_{k=0}^\infty\int_{k-3}^{k+3}|\tau|^{1+2\delta}
\big((k+1)^{\frac1\alpha}-k^{\frac1\alpha}\big)(k+1)^{(1-\frac4q)/\alpha}\|\chi_{k}^{\alpha}f\|^2_{L^2(\mathbb{R}^2)}d\tau\nonumber\\
&\lesssim\sum_{k=0}^\infty\big((k+1)^{\frac1\alpha}-k^{\frac1\alpha}\big)(k+1)^{\frac1\alpha(1-\frac4q)+1+2\delta}\|\chi_{k}^{\alpha}f\|^2_{L^2(\mathbb{R}^2)}\nonumber\\
&=\sum_{k=0}^\infty\|(1+k)^{\frac1\alpha(1-\frac2q)+\delta}\chi_{k}^{\alpha}f\|^2_{L^2(\mathbb{R}^2)}
\approx\|(I+\sqrt{\mathcal{L}_{\mathbf{A}}})^{1-\frac2q+\delta\alpha}f\|^2_{L^2(\mathbb{R}^2)}.\nonumber
\end{align*}

\section{Proof of theorem \ref{thm:global}}

In this section, we extend the local estimates in Theorem \ref{thm:local} to global estimates.  In fact, we shall prove the following estimate:
\begin{equation}\label{global-max}
\|u\|_{L^q(\mathbb{R}^2;L^\infty(\mathbb{R}))}\lesssim\|\mathcal{L}_{\mathbf{A}}^{s/2}f\|_{L^2(\mathbb{R}^2)},\quad6\leq q\leq\infty.
\end{equation}

To prove ~\eqref{global-max}~, we only need to show that, for $\epsilon\in(0,1)$, there exists a constant independent of $\epsilon$ such that
\begin{equation}
\|e^{it\mathcal{L}_{\mathbf{A}}^{\frac{\alpha}{2}}}f\|_{L^q(\mathbb{R}^2;L^\infty((-1/\epsilon,1/\epsilon)))}
\lesssim\|(\sqrt{\mathcal{L}_{\mathbf{A}}}+\epsilon I)^sf\|_{L^2(\mathbb{R}^2)}.
\end{equation}
Similar to the proof of Theorem \ref{thm:local}, it is further sufficient to show that
\begin{equation}
\|\rho(\epsilon t)e^{it\mathcal{L}_{\mathbf{A}}^{\frac{\alpha}{2}}}f\|_{L^q(\mathbb{R}^2;L^\infty(\mathbb{R}))}
\lesssim\|(\mathcal{L}_{\mathbf{A}}+\epsilon I)^{s/2}f\|_{L^2(\mathbb{R}^2)},
\end{equation}
where $\rho\in\mathcal{S}(\mathbb{R})$ is some fixed Schwarz function with supp$\hat{\rho}\subset(-2,2)$.

%{\bf Case of $\alpha\leq1$}:

Let
\begin{equation}
\chi_{k\epsilon}^{\alpha}:=\chi_{k\epsilon,\epsilon}(\mathcal{L}_{\mathbf{A}}^{\frac\alpha2})=\int_0^\infty \chi_{[k\epsilon,(k+1)\epsilon]}(\lambda^{\alpha}) \, dE_{\sqrt{\mathcal{L}_{\mathbf{A}}}}(\lambda),\quad \epsilon\in(0,1),
\end{equation}
then we have decomposition $f=\sum_{k=0}^\infty\chi_{k\epsilon}^{\alpha}f$ for a fixed value of $\epsilon$.

By \eqref{spec-2} and $TT^*$ argument, we have for $6\leq q<\infty$
\begin{equation}\label{chi-k-epsilon-q}
\|\chi_{k\epsilon}^{\alpha}f\|^2_{L^q(\mathbb{R}^2)}
\lesssim(((k+1)\epsilon)^{\frac1\alpha}-(k\epsilon)^{\frac1\alpha})((k+1)\epsilon)^{(1-\frac4q)/\alpha}\|f\|^2_{L^2(\mathbb{R}^2)}, \quad \forall k\geq0.
\end{equation}
Sobolev embedding $\dot{H}_t^{\frac{1}{2}+\epsilon}(\mathbb{R})\hookrightarrow L^\infty_t(\mathbb{R})$ will give us
\begin{equation}
\|\rho(\epsilon t)e^{it\mathcal{L}_{\mathbf{A}}^{\frac{\alpha}{2}}}f\|_{L^q(\mathbb{R}^2;L^\infty(\mathbb{R}))}
\lesssim\||D_t|^{1/2+\delta}(\rho(\epsilon t)e^{it\mathcal{L}_{\mathbf{A}}^{\frac{\alpha}{2}}}f)\|_{L^q(\mathbb{R}^2;L^2(\mathbb{R}))}.
\end{equation}
Let
\begin{equation}
F^\epsilon(t,x):=|D_t|^{1/2+\delta}(\rho(\epsilon t)e^{it\mathcal{L}_{\mathbf{A}}^{\frac\alpha2}}f)(x)
\end{equation}
and take the Fourier transform in time variable, we deduce that
\begin{equation}
\widehat{F^\epsilon}(\tau,x)=|\tau|^{1/2+\delta}\epsilon^{-1}(\hat{\rho}(\epsilon^{-1}(\tau-\mathcal{L}_{\mathbf{A}}^{\frac{\alpha}{2}}))f)(x)
:=\sum_{k=0}^\infty\widehat{F^\epsilon_k}(\tau,x),
\end{equation}
where
\begin{equation}
\widehat{F^\epsilon_k}(\tau,x)=|\tau|^{1/2+\delta}\epsilon^{-1}(\hat{\rho}(\epsilon^{-1}(\tau-\mathcal{L}_{\mathbf{A}}^{\frac{\alpha}{2}}))\circ\chi_{k\epsilon}^{\alpha}f)(x).
\end{equation}
Consequently, we have the following orthogonality
\begin{equation}
\int_{\mathbb{R}}F^\epsilon_k(t,x)\overline{F^\epsilon_\ell(t,x)}dt=(2\pi)^{-1}\int_{\mathbb{R}}\widehat{F^\epsilon_k}(\tau,x)\overline{
\widehat{F^\epsilon_\ell}(\tau,x)}d\tau=0,\quad |k-\ell|>100.
\end{equation}
Notice the fact that $f=\sum_{k=0}^\infty\chi^\epsilon_kf$, we obtain
\begin{align}
\|F^\epsilon(\cdot,x)\|_{L^2(\mathbb{R})}^2=\|\sum_{k=0}^\infty F^\epsilon_k(\cdot,x)\|_{L^2(\mathbb{R})}^2
&\lesssim\sum_{k=0}^\infty\|F^\epsilon_k(\cdot,x)\|_{L^2(\mathbb{R})}^2\nonumber\\
&=\sum_{k=0}^\infty\|\widehat{F^\epsilon_k}(\cdot,x)\|_{L^2(\mathbb{R})}^2.
\end{align}
Collecting the above facts, we deduce that% for $\alpha<1$
\begin{align}
\|\rho(\epsilon t)e^{it\mathcal{L}_{\mathbf{A}}^{\frac{\alpha}{2}}}f\|^2_{L^q(\mathbb{R}^2;L^\infty(\mathbb{R}))}
&\lesssim\sum_{k=0}^\infty\int_{\mathbb{R}}\|\widehat{F^\epsilon_k}(\tau,\cdot)\|_{L^q(\mathbb{R}^2)}^2d\tau\\
&=\epsilon^{-2}\sum_{k=0}^\infty\int_{(k-3)\epsilon}^{(k+3)\epsilon}|\tau|^{1+\delta}\|\hat{\rho}(\epsilon^{-1}
(\tau-\mathcal{L}_{\mathbf{A}}^{\frac{\alpha}{2}}))\chi_{k\epsilon}^{\alpha}f\|_{L^q(\mathbb{R}^2)}^2d\tau\nonumber\\
&\lesssim\epsilon^{-2}\sum_{k=0}^\infty\epsilon((k+1)\epsilon)^{1+\delta}
\|\chi_{k\epsilon}^{\alpha}f\|_{L^q(\mathbb{R}^2)}^2.\nonumber
\end{align}
%Recall \eqref{chi-k-epsilon-q}
%\begin{equation}
%\|\chi_{k\epsilon}^{\alpha}f\|_{L^q(\mathbb{R}^2)}^2
%\lesssim(((k+1)\epsilon)^\frac1\alpha-(k\epsilon)^\frac1\alpha)((k+1)\epsilon)^{(1-\frac{4}{q})/\alpha}
%\|\chi_{k\epsilon}^{\alpha}f\|_{L^2(\mathbb{R}^2)}^2,\quad k=0,1,2,\cdots.
%\end{equation}
Using \eqref{chi-k-epsilon-q} and letting $s:=1-\frac{2}{q}+\alpha\delta$, we further have
\begin{align}
\|\rho(\epsilon t)&e^{it\mathcal{L}_{\mathbf{A}}^{\frac{\alpha}{2}}}f\|_{L^q(\mathbb{R}^2;L^\infty(\mathbb{R}))}^2\\
&\lesssim\epsilon^{-2}\sum_{k=0}^\infty\epsilon((k+1)\epsilon)^{1+\delta}
(((k+1)\epsilon)^\frac1\alpha-(k\epsilon)^\frac1\alpha)((k+1)\epsilon)^{(1-\frac{4}{q})/\alpha}\|\chi_{k\epsilon}^{\alpha}f\|_{L^2(\mathbb{R}^2)}^2 \nonumber\\
&=\epsilon^{-2}\sum_{k=0}^\infty\epsilon^2((k+1)\epsilon)^{\frac{2}{\alpha}(1-\frac{2}{q})+\delta}
\|\chi_{k\epsilon}^{\alpha}f\|_{L^2(\mathbb{R}^2)}^2\nonumber\\
&=\sum_{k=0}^\infty\|((k+1)\epsilon)^s\chi_{k\epsilon}^{\alpha}f\|_{L^2(\mathbb{R}^2)}^2\thickapprox\|(\sqrt{\mathcal{L}_{\mathbf{A}}}+\epsilon I)^sf\|_{L^2(\mathbb{R}^2)}^2.\nonumber
\end{align}

\noindent\textbf{Acknowledgements}
The authors would like to thank Junyong Zhang for the encouragement and constructive suggestions.
They would also like to thank the anonymous referees for their helpful comments and suggestions.

\begin{center}

\end{center}

\end{document}